\numberwithin{equation}{section}
\renewcommand{\thetheoremName}
\def\new{\mathrm{new}}
\def\Z{\mathbf{Z}}
\def\OL{\mathcal{O}}
\def\Q{\mathbf{Q}}
\def\R{\mathbf{R}}
\def\C{\mathbf{C}}
\def\a{\mathfrak{a}}
\def\m{\mathfrak{m}}
\def\p{\mathfrak{p}}
\def\d{\mathfrak{d}}
\def\GL{\mathrm{GL}}
\def\Gal{\mathrm{Gal}}
\newtheorem{theorem}{Theorem}[section]
\newtheorem*{theorem*}{Main Theorem}
\newtheorem{prop}[theorem]{Proposition}
\newtheorem{remark}[theorem]{Remark}
\newtheorem{alg}{Algorithm}
\def\H{\mathbf{H}}
\def\n{\mathfrak{n}}
\def\q{\mathfrak{q}}
\begin{document}

\title{There Exist Non-CM Hilbert modular forms of partial weight one}
\author{Richard A. Moy}

\thanks{Corresponding Author: Richard Moy, Northwestern University,
2033 Sheridan Road, Evanston, IL, United States. Email: ramoy88@math.northwestern.edu.}

\address{Richard Moy, Department of Mathematics, Northwestern University, 2033 Sheridan Road, Evanston, IL 60208, United States}
\email{ramoy88@math.northwestern.edu}

\author{Joel Specter}
\address{Joel Specter, Department of Mathematics, Northwestern University, 2033 Sheridan Road, Evanston, IL 60208, United States}
\email{jspecter@math.northwestern.edu}

\subjclass[2010]{11F41, 11F80.}

%\correspdetails{corr.email@math.edu}

\begin{abstract}
In this note, we prove that there exists a classical Hilbert modular cusp form over $\Q(\sqrt{5})$ of partial weight one which does \textit{not} arise from the induction of a Gr\"ossencharacter from a CM extension of $\Q(\sqrt{5}).$
\end{abstract}

\maketitle

\section{Introduction}\label{sect:intro}
It is a well-established ``folklore question''\footnote{We originally learnt of this problem through Fred Diamond. In conversations with Kevin Buzzard, Don Blasius, and Fred Diamond, it became clear that the question of whether such forms existed was apparent to the authors of~\cite{BR} in the '80s (and may well have occurred to others before then). The question gained some urgency with the advent of Fraser Jarvis' construction of Galois representations for partial weight one forms \cite{J} in the mid-'90s, since, if the only such forms were CM, then \cite{J} would be a trivial consequence of Class field theory. We have heard several reports of the question being raised again at this time. In light of these stories, we feel safe in calling the problem a ``well-known folklore question.''} whether there exists a totally real field $F$ and a classical Hilbert modular form $f$ of partial weight one which does \textit{not} arise from the induction of a Gr\"ossencharacter from some CM extension of $F$. This note answers the question in the affirmative (see Theorem~\ref{theorem:main}). If, in addition, all the weights of $f$ have the same parity, then, assuming local-global compatibility, there exists a compatible
family of representations
$(L,\{\rho_{\lambda}\})$ with the following intriguing property:
\begin{quote}
Let $\ell$ be a prime in $\OL_F$ not dividing the level of $f$ and totally split in $F$. If $\lambda$ is a prime in $\OL_L$ above $\ell$, then the corresponding representation,
$$\rho_{\lambda}: G_F \rightarrow \GL_2(\OL_{\lambda})$$
will be geometric, have Zariski dense image, and yet be unramified for at least one $v|\ell$.
\end{quote}
Many cases of local-global compatibility are now known \cite{L}. Although such a  beast seems somewhat peculiar, there is no obvious \emph{a priori} reasons why it should not exist. On the other hand, there does not seem to be any obvious way (even conjecturally) to produce such a modular form, either by automorphic or motivic methods. Hence, to answer the question, we must find such a form, which we do. Although (in principle) the method of computation used in this paper applies to general totally real fields, we shall restrict to real quadratic fields $F$ with narrow class number one for convenience. Indeed,  all of our computations took place with Hilbert modular forms for the field $F = \Q(\sqrt{5})$.

%%%%%%%%%%%%%%%%%%%%%%%%%%%%%%%%%%%%
\subsection{The Computation}\label{sect:computation} Our search for partial weight one Hilbert modular forms is premised on the philosophy that finite dimensional spaces of meromorphic modular forms which are stable under the action of the Hecke algebra ought to be modular. In the case of classical modular forms, this idea has been formalized by George Schaeffer. Let $V$ be a finite dimensional space of meromorphic modular forms on $\Gamma_0(N)$ of weight ${k}$ and nebentypus $\chi$ which are holomorphic at infinity. In his thesis \cite{S}, Schaeffer proves that if $V$ is stable under the action of a Hecke operator $T_p$ for $p\nmid N,$ then $V \subseteq M_{{k}}(\Gamma_0(N), \chi, \C)$. As a corollary, one observes that for any such $V$ containing $M_{{k}}(\Gamma_0(N), \chi, \C),$ the chain (for $p\not|N$)
$$
V \supseteq V \cap T_pV \supseteq V \cap T_pV \cap T_p^2V \supseteq ...
$$
stabilizes to $M_k(\Gamma_0(N), \chi, \C)$ in less than $\dim_{\C} V$ steps \cite{S,S2}.
\medskip

Schaeffer's principal application of this theorem is the effective computation of the space $M_{{1}}(\Gamma_0(N), \chi, \C)$ of classical weight one modular forms. Suppose one wishes to compute this space. To begin, simply take any Eisenstein series $E \in M_{{1}}(\Gamma_0(N), \chi^{-1}, \C)$ and let $V$ be the space of ratios of forms in $M_2(\Gamma_0(N), \C)$ and $E.$ Then $V\supseteq M_{{1}}(\Gamma_0(N), \chi, \C).$ It then suffices to compute the intersection of $V$ with its Hecke translates. One can reduce this computation to one in linear algebra by passing to Fourier expansions. The Fourier expansions of forms in $M_2(\Gamma_0(N), \C)$ are easily calculated to any bound via modular symbols and the Fourier expansion of $E$ has a simple formula. Hence, the Fourier expansion of any form in $V$ is easily calculated to any bound. The operator $T_p$ acts on Fourier expansions formally via a well known formula. What makes the method effective is that it requires only an explict finite number of Fourier coefficients for a basis of the space $V$ to calculate $M_{{1}}(\Gamma_0(N), \chi, \C).$ The number of coefficients required is determined by the Sturm bounds.
\medskip

Schaeffer's method generalizes nicely to the case of Hilbert modular forms. Let $\mathfrak{n}$ be a modulus of $\Q(\sqrt{5})$ and $\chi$ a ray class character of conductor $\mathfrak{n}$; we are interested in calculating the space $S_{[m,1]}(\Gamma_0(\mathfrak{n}), \chi, \C)$ of Hilbert cusp forms of partial weight one. As in the case of classical modular forms, there exists an Eisenstein series $E_{1,{\chi^{-1}}} \in M_{[1,1]}(\Gamma_0(\mathfrak{n}), \chi^{-1}, \C)$ and one can consider the space $V$ of ratios with numerators in $S_{[m+1,2]}(\Gamma_0(\mathfrak{n}),  \C)$ and denominator $E_{1,{\chi^{-1}}}.$ This is a finite dimensional space of meromorphic forms which contains $S_{[m,1]}(\Gamma_0(\mathfrak{n}), \chi, \C)$ as its maximal holomorphic subspace. Assuming $\mathfrak{n}$ is square free, one can use Dembele's algorithm \cite{D} as implemented in {\tt magma\rm} \cite{BCP}, to produce the Fourier expansions of a basis for the space $S_{[m+1,2]}(\Gamma_0(\mathfrak{n}), \Q(\sqrt{5}))$ to any desired degree of accuracy. The Fourier expansion of $E_{1,{\chi^{-1}}}$ is given by an explicit formula. Hence, the Fourier expansion of the meromorphic forms in $V$ can be calculated to any desired degree of accuracy. For a prime $\mathfrak{p}$ of $\mathcal{O}_F,$ the Hecke operator $T_{\mathfrak{p}}$ acts on the Fourier expansions of the meromorphic forms in $V$ formally via an explicit formula. So, as in the case of classical forms, one may hope to calculate the $T_{\mathfrak{p}}$ stable subspace of $V$ via techniques in linear algebra.
\medskip

Unfortunately, this direct generalization of Schaeffer's method is impractical from a computational prospective. In comparison with the case of classical modular forms, the number of Fourier coefficients needed to prove equality of two modular forms and the amount of computation needed to calculate those Fourier coefficients is much greater. For this reason, we structure our search method so that it requires as few Fourier coefficients as possible.
\medskip

For the details of our search, we refer the reader to Section \ref{sect:algorithm}. But the idea is as follows; we calculate the Fourier expansions of the forms in $S_{[m+1,2]}(\Gamma_0(\mathfrak{n}),1) / E_{1,\chi^{-1}}$ truncated to some chosen bound. We calculate the intersection of these spaces of truncated formal Fourier expansions using linear algebra. If the dimension of the intersection coincides with the dimension of the subspace of forms with complex multiplication (CM), then every form in $S_{[n,1]}(\Gamma_0(\mathfrak{n}), \chi, \Q(\sqrt{5}))$ has CM. (Using class field theory, we can compute the dimension of the CM subspace in advance.) In practice, using a modest bound, we were able to restrict the existence of a non-CM weight one Hilbert modular form of small level and norm to a handful of candidate spaces where the intersection is larger than expected. One can then check if a form $f \in V$ in such a candidate space is holomorphic by checking if there exists a form $g \in S_{[dn,d]}(\Gamma_0(\mathfrak{n}), \chi^d, \Q(\sqrt{5}))$ such that the $f^d=g$. Our search yielded the existence of a nonparallel weight one Hilbert modular form without CM.

\begin{theorem*} \label{theorem:main} Let $\n = (14) \subset \OL_{\Q(\sqrt{5})}$ and let $\chi$ be the degree $6$ ray class character of conductor $(7)\cdot\infty_1\infty_2$ such that
$\chi(2) =  \frac{-1 + \sqrt{-3}}{2}$. The space of cusp forms
$S_{[5,1]}(\Gamma_0(\mathfrak{n}),\chi,\C)$ is $2$-dimensional, and has a basis with coefficients in $H=\Q(\sqrt{5},\chi)$. This space has a basis over $L=H(\sqrt{-19})$ consisting of two conjugate eigenforms, neither of which admit complex multiplication.
\end{theorem*}

\begin{remark} \emph{Let $\pi$ be the automorphic representation of $\GL_2(\mathbb{A}^{\infty}_F)$ associated to either of these newforms.
Since the character~$\chi$ has conductor prime to~$2$ and the level at~$2$ is $\Gamma_0(2)$,   the local component~$\pi_2$ is Steinberg (up to an unramified quadratic twist).
In particular, this implies that local-global compatibility results of~\cite{L,N} could not be proved directly using congruence methods to higher
weight, which would only be sufficient for proving compatibility up to~$N$ semi-simplification. }
\end{remark}

%\begin{remark}
%\emph{
%We originally learnt of this problem through Fred Diamond \cite{Di}. In conversations with Kevin Buzzard, Don Blasius, and Fred Diamond, it became clear that the question of whether such forms existed was apparent to the authors of \cite{BR} in the 80's (and may well have occurred to others before then). The question gained some urgency with the advent of Fraser Jarvis' construction of Galois representations for partial weight one forms \cite{J} in the mid-90's, since, if the only such forms were CM, then \cite{J} would be a trivial consequence of Class field theory. We have heard several reports of the question being raised again at this time. In light of these stories, we feel safe in calling the problem a ``well-known folklore question.''
%}
%\end{remark}

%%%%%%%%%%%%%%%%%%%%%%%%%%%%%%%%%%%%
\section{Hilbert Modular Forms}

In this section, we state some basic definitions and results on classical Hilbert modular forms. Let $F$ be a real quadratic field of narrow class number one. We fix an ordering on the two embeddings of $F$ into $\R$ and denote, for $a \in F,$ the image of $a$ under the $i$-th embedding by $a_i.$ We say an element $a\in F$ is totally positive if $a_i>0$ for all $i$ and denote the ring of all such elements by $\OL_F^{+}$. Similarly, we have two natural embeddings of the matrix ring $\textup{M}_2(F)$ into the matrix ring $\textup{M}_2(\R)$. If $\gamma\in\textup{M}_2(F)$, let $\gamma_1$ and $\gamma_2$ denote the image of $\gamma$ under the $i$-th embedding. Let $\mathfrak{d}_{F}=(\delta)$ be the different of $F/\Q$ where $\delta\in\OL_F^+$. For an integral ideal $\mathfrak{n}$ of $F,$ we define

$$\Gamma_0(\mathfrak{n}) := \left\{ \left(
\begin{matrix}
a & b\\
c & d\\
\end{matrix}\right)\in \textup{GL}_2^+(F): a,d\in\OL_F,\ \  c\in\mathfrak{n}\mathfrak{d},\ \ b\in\d^{-1},\ \ ad-bc\in\OL_F^\times \right\}$$ where $\textup{GL}_2^+(F)$ is the subgroup of $\textup{GL}_2(F)$ composed of matrices with totally positive determinant. If $\H$ is the complex upper half-plane, the group $\Gamma_0(\mathfrak{n})$ acts on $\H \times \H$ via fractional linear transformations by the rule

$$\left(\begin{matrix}
a & b\\
c & d\\
\end{matrix} \right).(z_1,z_2) = \left(\frac{a_1z_1 + b_1}{c_1z_1 + d_1}, \frac{a_2z_2 + b_2}{c_2z_2 + d_2} \right).$$

Let $\underline{k} := [k_1, k_2]$ be an ordered pair of nonnegative integers. For $\gamma = $$\left(\begin{matrix}
a & b\\
c & d\\
\end{matrix}\right) \in \textup{GL}_2^+(F)$ and $z \in \H \times \H$ set $$j(\gamma,z)^{\underline{k}} := \det(\gamma_1)^{\frac{-k_1}{2}}\det(\gamma_2)^{\frac{-k_2}{2}}(c_1z_1 + d_1)^{k_1}(c_2z_2 + d_2)^{k_2}.$$

If $f:\H\times\H\rightarrow\C$ and $\gamma \in \textup{GL}_2^+(F),$ we write $f|_{\gamma}$ to mean the function $f|_{\gamma}: \H \times \H \rightarrow \C$ given by
$$
f|_{\gamma}(z) = j(\gamma,z)^{-\underline{k}} f(\gamma z).
$$

Consider a numerical character $\chi:(\mathcal{O}_F/\mathfrak{n})^{\times} \rightarrow \C^{\times}$ which satisfies $\chi(u) = \left(\frac{u_1}{|u_1|}\right)^{-k_1}\left(\frac{u_2}{|u_2|}\right)^{-k_2}$ for all $u\in \mathcal{O}_F^{\times}.$ A \emph{Hilbert modular form} of weight $\underline{k},$ level $\mathfrak{n}$,  and character $\chi$ is a holomorphic function $f\colon \H \times \H\rightarrow\C$ such that for all $\gamma \in \Gamma_0(\mathfrak{n})$,
\begin{equation}\label{eq_transfrule}
f|_{\gamma}(z)=\chi(d)f(z).
\end{equation}

We denote the $\C$-vector space of all such functions by $M_{\underline{k}} (\Gamma_0(\mathfrak{n}), \chi, \C)$ and by $M_{\underline{k}}(\Gamma_0(\mathfrak{n}),\C)$ when $\chi$ is the trivial character. As in the case of classical modular forms, we can compute Fourier expansions of Hilbert modular forms.

%%%%%%%%%%%%%%%%%%%%%%%%%%%%%%%%%%%%
\subsection{Fourier Expansions}\label{sect:qexp}

If $f \in M_{\underline{k}}(\Gamma_0(\mathfrak{n}),\chi, \C),$ then for all $d \in \mathfrak{d}^{-1}_F$
$$f(z) = f(z + d)$$ by the transformation rule \eqref{eq_transfrule} since $\left(\begin{smallmatrix} 1& d\\ 0 & 1\\ \end{smallmatrix}\right)\in\Gamma_0(\mathfrak{n})$. It follows from Fourier analysis that the form $f$ is given by the series
$$
f(z) = \sum_{\alpha \in \OL_F} c_{\alpha}(f) e^{2\pi i( \alpha_1z_1 + \alpha_2z_2)}
$$
in a neighborhood of the cusp $(\infty, \infty).$ The Koecher Principle \cite[\textsection 1]{G} states that $c_{\alpha}(f) = 0$ unless $\alpha$ is totally positive or $\alpha=0.$ If the constant term of the Fourier expansion of $f|_\gamma$ is zero for all $\gamma\in\textup{GL}_2^+(F)$, then we call $f$ a \textit{cusp form} and denote the space of such forms $S_{\underline{k}}(\Gamma_0(\mathfrak{n}), \chi, \C).$ We denote the space of cusp forms of level $\mathfrak{n}$, weight $\underline k$, and trivial character by $S_{\underline k}(\Gamma_0(\mathfrak{n}),\C)$.

Besides the Koecher Principle, the Fourier expansions of Hilbert modular forms have additional structure. Let $f \in S_{\underline{k}}(\Gamma_0(\mathfrak{n}),\chi, \C).$ For any totally positive unit $\eta$ in $\mathcal{O}_F,$ one can check that the coefficient $c_{\alpha}(f)$ satisfies the identity:
\begin{equation}\label{eq_coeff1}
c_{\eta \alpha}(f) = \eta_1^{k_1/2}\cdot\eta_2^{k_2/2}\cdot c_{\alpha}(f)= \eta_2^{(k_2-k_1)/2}\cdot c_{\alpha}(f)
\end{equation}
by using the transformation rule \eqref{eq_transfrule} with $\left(\begin{smallmatrix}\eta&0\\ 0&1\\ \end{smallmatrix}\right)\in\Gamma_0(\mathfrak{n})$ and equating Fourier expansions. If desired, we can create a formal Fourier expansion indexed over the ideals of $F$ rather than indexed over elements of $\OL_F$. In particular, for an ideal $\a = (\alpha)$, we can set
\begin{equation}\label{eq_coeff2}
c(\a,f):=N(\a)^{(k_1-k_2)/2}\cdot c_{\alpha}(f)/\alpha_1^{(k_1-k_2)/2} = c_{\alpha}(f) \cdot {\alpha_2}^{(k_1-k_2)/2},
\end{equation}
and one can easily check that this is independent of the choice of totally positive generator $\alpha$ of $\a$ by using \eqref{eq_coeff1} above.

%%%%%%%%%%%%%%%%%%%%%%%%%%%%%%%%%%%%
\subsection{Hecke Operators}\label{sect:hecke}

For an integral ideal $\mathfrak{n}$ of $\OL_F$, let $$\Gamma_1(\mathfrak{n}) = \left\{ \left(
\begin{matrix}
a & b\\
c & d\\
\end{matrix}\right)\in \textup{GL}_2^+(F): a\in\OL_F,\ \  b\in\mathfrak{d}^{-1},\ \ c\in\mathfrak{n}\mathfrak{d},\ \ d-1 \in  \mathfrak{n}\right\}.$$  If $\mathfrak{q}$ is an integral ideal of $\mathcal{O}_F$, we may choose a totally positive generator $\pi$ of $\mathfrak{q}$ and write the disjoint union

$$
\Gamma_1(\mathfrak{n})\left(
\begin{matrix}
1 & 0\\
0 & \pi \\
\end{matrix}\right)
\Gamma_1(\mathfrak{n})=\coprod_j\Gamma_1(\mathfrak{n})\gamma_j
$$
where the $\gamma_j$ are a finite set of right coset representatives. We define the \emph{$\mathfrak{q}^{th}$ Hecke operator} to be
\begin{equation}\label{eq_hecke}
T_{\mathfrak{q}} f := \sum_j f|_{\gamma_j}.
\end{equation}

If $\mathfrak{q}=(\pi)$ is a prime ideal relatively prime to $\mathfrak{n}$, then we may choose the following coset representatives for the $\gamma_j$:
$$
\gamma_{\beta}:=\left(\begin{matrix}1 & \epsilon \delta^{-1}\\ 0 & \pi\\ \end{matrix} \right)\ \ \ \textup{and}\ \ \ \gamma_{\infty}:=\left(\begin{matrix}\alpha & \beta \delta^{-1} \\ \delta \nu&\pi \\ \end{matrix} \right)\left(\begin{matrix}\pi &0 \\ 0 &1\\ \end{matrix}\right)
$$
where $\epsilon$ runs through a complete set of representatives for $\OL_F\slash \mathfrak{n}$, $\delta$ is a totally positive generator for the different $\mathfrak{d}$, $\nu$ is a totally positive generator for $\mathfrak{n}$, and $\alpha,\beta\in\OL_F$ such that $\alpha\pi-\nu \beta=1$. If we normalize our Hecke operator by multiplying it by the constant $\pi_1^{k_1/2-1}\pi_2^{k_2/2-1}$, then it has the following effect on the Fourier expansion of a modular form $f\in M_{\underline k}(\Gamma_0(\mathfrak{n}),\chi,\C)$:
$$c_{\alpha}(T_{\mathfrak{q}} f) = c_{\alpha \pi} + \pi_1^{k_1-1} {\pi_2}^{k_2-1} \chi(\q) c_{\alpha/\pi} = c_{\alpha \pi} + \pi_2^{k_2-k_1} N(\q)^{k_1-1} \chi(\q) c_{\alpha/\pi}$$
where $N(\mathfrak{q})$ denotes the numerical norm of the ideal $\mathfrak{q}$. On the other hand, if $\q$ is prime and exactly divides $\n$, then
$$c_{\alpha}(T_{\pi} f) = c_{\alpha \pi}.$$

%%%%%%%%%%%%%%%%%%%%%%%%%%%%%%%%%%%%
\subsection{Basis For $S_{\underline k}(\Gamma_0(\mathfrak{n}),\chi,\C)$}\label{sect:basis}
In general, there will not be a basis of eigenforms for $S_{\underline{k}}(\Gamma_0(\n),\chi,\C)$. Rather,  there will be a \emph{new-space}
$S^{\mathrm{new}}_{\underline{k}}(\Gamma_0(\n),\chi,\C)$ which
will be generated by eigenforms which we now describe.
\medskip

Let $\m$ be a divisor of $\n$, and let $\mathfrak{b}$ be a divisor of $\n/\m$. Then
there is a map
$$V_{\m,\mathfrak{b}}: S_{\underline{k}}(\Gamma_0(\m),\chi,\C) \rightarrow S_{\underline{k}}(\Gamma_0(\n),\chi,\C)$$
given by
$$\sum_{\alpha\in\OL_F} c_{\alpha} q^{\alpha} \mapsto \sum_{\alpha\in\OL_F} c_{\alpha} q^{b \alpha}$$
where $\mathfrak{b} = (b)$ and $b\in\OL_F^+$. This map only depends on $b$ up to a scalar which one can easily verify from \eqref{eq_coeff1}. Let $S_{\underline{k}}^{\mathrm{old}}(\Gamma_0(\mathfrak{n}), \chi,\C)$ be the subspace  of $S_{\underline k}(\Gamma_0(\mathfrak{n}),\chi,\C)$ spanned by $V_{\mathfrak{m},\mathfrak{b}}(f)$ for all $f\in S_{\underline k} (\Gamma_0(\mathfrak{m}),\chi,\C)$ and all $(\mathfrak{m},\mathfrak{b})$ with $\mathfrak{m}|\mathfrak{n}$ where $\mathfrak{m}\ne\mathfrak{n}$ and $\mathfrak{b}|(\mathfrak{n}\slash\mathfrak{m})$. The orthogonal complement of $S_{\underline{k}}^{\mathrm{old}}(\Gamma_0(\mathfrak{n}), \C)$, under the Petersson inner product, is the space $S_{\underline{k}}^{\mathrm{new}}(\Gamma_0(\mathfrak{n}), \chi, \C)$; it has a basis of eigenforms which we will refer to as \emph{newforms.}
\medskip

Dembele's algorithm computes the space of newforms $S_{\underline{k}}^{\mathrm{new}}(\Gamma_0(\mathfrak{n}), \chi, \C)$ by using the fact that they are in bijection, via the Jacquet-Langlands correspondence, with a certain space of automorphic forms on a quaternion algebra. We then exploit the fact, special to $\GL(2)$, that the Fourier expansion of a newform can be recovered from its Hecke eigenvalues. Let $\q$ be a non-zero integral prime ideal, and write $\q = (\pi)$ for some totally positive $\pi$. There is a Hecke operator $T_{\q}$ which acts on the space of cusp forms $S_{\underline k}(\Gamma_0(\mathfrak{n}),\chi,\C)$ as defined in \eqref{eq_hecke}. With the identities
$$T_{\q^n} = T_{\q^{n-1}} T_{\q}  -\chi(\mathfrak{q})\pi_1^{k_1-1}\pi_2^{k_2-1} T_{\q^{n-2}},$$
for $(\q,\n) = 1$,
$$T_{\q^n} = T^n_{\q}$$
for $\q |\mathfrak{n}$, and
$$
T_{\mathfrak{rs}}=T_{\mathfrak r}T_\mathfrak{s}
$$
for $(\mathfrak{r},\mathfrak{s})=1$, one can compute the Fourier expansions of the newforms in $S_{\underline k}^{\mathrm{new}}(\Gamma_0(\mathfrak{n}),\chi,\C)$. One can easily calculate the effect of the Hecke operators on formal Fourier expansions indexed over ideals of $\OL_F$ by using \eqref{eq_coeff2}.

%%%%%%%%%%%%%%%%%%%%%%%%%%%%%%%%%%%%
\subsection{Eisenstein Series of Weight One}\label{sect:eis}

In \cite{Sh2}, Shimura gives a prescription which attaches to any pair of narrow class characters of $F$ an Eisenstein series of parallel weight $k.$  The Fourier expansions of these Eisenstein series are calculated in \cite{DDP}, and we recall this result here. As we only make use of Eisenstein series of parallel weight $\underline k=[1,1]$ associated to pairs consisting of a trivial and nontrivial character, we include only the details which are relevant to this case.
\medskip

In the classical setting, the Eisenstein series are defined as sums over a lattice, and an analogous construction is used in the case of Hilbert Modular Forms. Let $\psi$ be a totally odd character of the narrow ray class group modulo $\mathfrak{n} $ and let
$$
U=\{u\in\OL_F^\times :\textup{Nm} (u)=1 ,\ u\equiv 1 \mod \mathfrak{n}\}.
$$
For $z\in\H^2$, $s\in\C$ with $\textup{Re}(2s+1)>2$, and $e_F(x)=\exp({2\pi i\cdot\textup{Tr}_{F\slash\Q}}(x))$, define
$$
f(z,s):=C\cdot \frac{1}{\textup{Nm}(\mathfrak{n})}\sum_{\substack{a\in\OL_F,\ b\in\mathfrak{d}^{-1}\\ (a,b)\ \textup{mod}\ U,\ (a,b)\ne(0,0) } } {\left(\frac{1}{(az+b)|az+b|^{2s} }\times\sum_{c\in\OL_F\slash\mathfrak{n}} {\textup{sgn}(c)^{[1,1]}\psi(c)e_F(-bc)} \right)}
$$
where
$$
C:=\frac{\sqrt{d_F}}{[\OL_F^\times : U]\textup{Nm}(\mathfrak{d})(-2\pi i)^2}
$$
and $\textup{sgn}(c)^r:= \textup{sgn}(c_1)^{r_1}\textup{sgn}(c_2)^{r_2}$ and $r=[r_1,r_2]\in(\Z\slash 2\Z)^2$.
\medskip

Observe that the above sum for $f(z,s)$ is over pairs $(a,b)$ of nonzero elements of the product $\OL_f\times \d^{-1}$ modulo the action of $U$ (which is diagonal multiplication) as well as over the representatives $c$ for $\OL_F\slash \mathfrak{n}$.
\medskip

For fixed $z$, $f(z,s)$ has meromorphic continuation in $s$ to the entire complex plane. Set
$$E_{1,\psi}(z):=f(z,0).$$
In \cite{DDP}, the authors compute the Fourier series of the above Eisenstein series, $E_{1,\psi}$. Their result is summarized in the following proposition.

\begin{prop} Let $\mathfrak{n}$ be an integral ideal of $F$ and let $\psi$ be a totally odd character of the narrow ray class group modulo $\mathfrak{n}.$ Then  there exists an element \mbox{$E_{1,\psi} \in M_{[1,1]}(\Gamma_0(\mathfrak{n}),\psi,\C)$} such that $c(\a, E_{1,\psi}) = \sum_{\mathfrak{m} | \a} \psi(\mathfrak{m})$ for all nonzero ideals $\a$ of $\mathcal{O}$ and $c(0, E_{1,\psi}) = \frac{L(\psi,0)}{4}.$ Explicitly,
$$
E_{1,\psi}=\frac{L(\psi,0)}{4}+\sum_{b\in\OL_F^+}{\left(\sum_{\m|(b)} {\psi(\m)} \right)\cdot e_F(bz)}
$$
\end{prop}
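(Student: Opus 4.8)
The plan is to read off the Fourier expansion directly from the series definition of $f(z,s)$, following \cite{DDP} (which in turn builds on Shimura's prescription \cite{Sh2}); the content is entirely in the bookkeeping of normalizations, so I will only indicate the shape of the argument. Throughout I work first in the region $\textup{Re}(2s+1)>2$, where the sum defining $f(z,s)$ converges absolutely and every rearrangement below is legitimate, and only afterwards appeal to the meromorphic continuation in $s$ to reach $s=0$.

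The first step is to evaluate the innermost sum
$$
G(b):=\sum_{c\in\OL_F/\mathfrak{n}}\textup{sgn}(c)^{[1,1]}\,\psi(c)\,e_F(-bc),\qquad b\in\mathfrak{d}^{-1}.
$$
This is a Gauss sum attached to $\psi$: writing $b=b_0/\delta$ with $b_0\in\OL_F$ and using that $e_F$ is the standard additive character of $F/\Q$ built from the different, one finds that $G(b)$ vanishes unless the denominator ideal of $(b)$ is divisible by $\mathfrak{n}$, and that on its support $G(b)$ factors as $\overline{\psi}$ of the numerator data times the normalized Gauss sum $\tau(\psi)$ (here one uses that $\psi$ has conductor exactly $\mathfrak{n}$). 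Substituting this back eliminates the auxiliary variable $c$ and exhibits $f(z,s)$ as a single Eisenstein--Hecke sum over pairs $(a,b)$ with $b$ confined to a fixed coset, weighted by $\psi$ of the relevant ideals.

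The second step splits off the contribution of $a=0$ from that of $a\neq 0$, continues each piece via Hecke's trick, and specializes at $s=0$; since $\psi$ is totally odd the archimedean factors stay finite and the continuation has no pole there, so $E_{1,\psi}(z)=f(z,0)$ is holomorphic. The $a=0$ part does not involve $z$: after continuation it is a fixed rational multiple of the Hecke $L$-value $L(\psi,0)$, and chasing the constant $C$ together with $[\OL_F^\times:U]$, $\textup{Nm}(\mathfrak{d})$, $\textup{Nm}(\mathfrak{n})$ and $\tau(\psi)$ pins that multiple down to $1/4$, giving $c(0,E_{1,\psi})=L(\psi,0)/4$. For the $a\neq 0$ part, group the terms according to the ideal $\mathfrak{a}=(a)$ --- legitimate because $F$ has narrow class number one, so $a$ is determined up to a totally positive unit, which is exactly what the $U$-action removes --- and, for each fixed $\mathfrak{a}$, apply Poisson summation to the remaining sum over $b\in\mathfrak{d}^{-1}$ (whose trace-dual lattice is $\OL_F$). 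The Poisson dual introduces frequencies $\nu\in\OL_F$; the archimedean integrals at $s=0$ vanish unless $\nu$ is totally positive (consistent with the Koecher principle) and, at this weight-one point, degenerate to the constant $1$. Hence the coefficient of $e_F(\nu z)$ collapses to a divisor sum $\sum\psi(\cdot)$ over ideals dividing $(\nu)$; a short manipulation absorbing the Gauss-sum factor $\tau(\psi)$ from the first step rewrites it as $\sum_{\mathfrak{m}\mid(\nu)}\psi(\mathfrak{m})$, i.e. $c(\mathfrak{a},E_{1,\psi})=\sum_{\mathfrak{m}\mid\mathfrak{a}}\psi(\mathfrak{m})$.

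The main obstacle is that weight one is the boundary case: the defining series sits exactly on the edge of absolute convergence, so one must check carefully that Hecke's continuation is genuinely holomorphic at $s=0$ and that the archimedean (confluent hypergeometric) integrals are evaluated at the correct degenerate point, and one must track the constant $C$, the Gauss sum $\tau(\psi)$, the unit index $[\OL_F^\times:U]$ and the norm factors through every manipulation in order to land on the clean coefficients $\sum_{\mathfrak{m}\mid\mathfrak{a}}\psi(\mathfrak{m})$ and $L(\psi,0)/4$ rather than on some stray scalar multiple. Since \cite{DDP} already carries out precisely this computation, the honest route is to cite their result and verify only that our character $\psi$ and our conventions for $e_F$, for $\textup{sgn}(\cdot)^{[1,1]}$, and for the different match theirs.
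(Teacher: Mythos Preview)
Your proposal is correct in spirit, but you should know that the paper gives no proof of this proposition at all: it is stated purely as a summary of the computation carried out in \cite{DDP}, with the preceding sentence ``In \cite{DDP}, the authors compute the Fourier series of the above Eisenstein series, $E_{1,\psi}$. Their result is summarized in the following proposition.'' serving as the entire justification. Your final paragraph --- that the honest route is to cite \cite{DDP} and verify the conventions match --- is therefore exactly the paper's approach, and the sketch you provide of the Gauss-sum evaluation, the $a=0$/$a\neq 0$ split, Hecke's continuation, and Poisson summation is a reasonable outline of what that cited computation contains, though it goes well beyond what the paper itself records.
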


%%%%%%%%%%%%%%%%%%%%%%%%%%%%%%%%%%%%
\subsection{CM Forms}\label{sect:cmform} While in general spaces of Hilbert modular forms of partial weight one are mysterious, we do have one source to reliably produce such forms; we can obtain them via automorphic induction from certain Gr\"ossencharacters. Specifically, let $K$ be a totally imaginary quadratic extension of $F$ and $\mathbb{A}_K$ be the adeles of $K.$ Consider a Gr\"ossencharacter $$\psi: \textup{GL}_1(K)\backslash \textup{GL}_1(\mathbb{A}_K) \rightarrow \C^{\times}$$ such that the local components of $\psi$ at the infinite places are
$$\psi_{\infty_1}(z) = z^{k-1}\ \ \ \ \ \ \textup{and}\ \ \ \ \ \ \psi_{\infty_2}(z) = |z|_{\infty_2}^{k-1}.$$
Then, by a theorem of Yoshida \cite{Y}, there exists a unique Hilbert modular eigenform $f_{\psi}$ of weight $[k, 1]$ such that the $L$-function of $f_{\psi}$ is equal to the $L$-function of $\psi.$
\medskip

A Hilbert modular eigenform $f$ is said to have CM if its primitive form is equal to $f_{\psi}$ for some character $\psi$. From the equality of $L$-functions, one observes that if $\mathfrak{p}$ is a prime of $F$ which is inert in $K,$ then the normalized Hecke eigenvalue $c(\mathfrak{p}, f_{\psi}) = 0.$ Conversely, this property classifies CM Hilbert modular forms. That is, if $f$ is a Hilbert modular form of level $\mathfrak{c}$ and $K$ is a totally imaginary extension of $F$ such that $c(\mathfrak{p},f) = 0$ for all primes $\mathfrak{p}\nmid \mathfrak{c}$ which are inert in $K,$ the primitive form of $f$ is $f_{\psi}$ for some Gr\"ossencharacter $\psi$ of $K.$ By class field theory, one can restate this fact as follows.

\begin{theorem}\label{theorem:CM} Let $f$ be a Hilbert modular eigenform of level $\mathfrak{c}.$ Then $f$ has CM if and only if there exists a totally odd quadratic Hecke character $\epsilon$ of $F$ of conductor $\mathfrak{f}$ such that $c(\mathfrak{p},f)\epsilon(\mathfrak{p}) = c(\mathfrak{p},f)$ for all $\mathfrak{p}\nmid\mathfrak{cf}.$ In this case, we say $f$ has CM by $\epsilon.$
\end{theorem}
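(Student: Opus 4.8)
The plan is to transport the classification of CM forms recalled above — which is phrased in terms of totally imaginary quadratic extensions $K/F$ — into the language of quadratic Hecke characters of $F$ by means of class field theory. The dictionary I will use is the standard one: class field theory gives a bijection $K \leftrightarrow \epsilon_{K/F}$ between totally imaginary quadratic extensions $K$ of $F$ and totally odd quadratic Hecke characters of $F$, under which (i) $K$ is totally imaginary exactly when $\epsilon_{K/F}$ is the sign character at each archimedean place, i.e.\ totally odd; (ii) for a prime $\mathfrak{p}$ unramified in $K$, one has $\epsilon_{K/F}(\mathfrak{p}) = 1$ if $\mathfrak{p}$ splits and $\epsilon_{K/F}(\mathfrak{p}) = -1$ if $\mathfrak{p}$ is inert; and (iii) by the conductor--discriminant formula for the quadratic extension $K/F$, the set of primes dividing the conductor $\mathfrak{f}$ of $\epsilon_{K/F}$ is exactly the set of primes ramifying in $K$. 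Since the notion of CM and the self-twist hypothesis both depend only on the eigenvalues $c(\mathfrak{p},f)$ at primes $\mathfrak{p}\nmid\mathfrak{c}$, and those agree with the corresponding eigenvalues of the primitive form of $f$, I may work with the primitive form throughout.

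For the forward direction, assuming $f$ has CM, its primitive form equals $f_\psi$ for a Gr\"ossencharacter $\psi$ of a totally imaginary quadratic extension $K/F$ with the prescribed infinity type. Put $\epsilon := \epsilon_{K/F}$; it is totally odd, and let $\mathfrak{f}$ be its conductor. Take any $\mathfrak{p}\nmid\mathfrak{cf}$. By (iii), $\mathfrak{p}$ is unramified in $K$, so it is either split or inert. If $\mathfrak{p}$ is split then $\epsilon(\mathfrak{p}) = 1$ by (ii) and the identity $c(\mathfrak{p},f)\epsilon(\mathfrak{p}) = c(\mathfrak{p},f)$ is trivial. If $\mathfrak{p}$ is inert then $\epsilon(\mathfrak{p}) = -1$, while the equality of $L$-functions $L(s,f_\psi) = L(s,\psi)$ from Yoshida's theorem \cite{Y} forces $c(\mathfrak{p},f) = c(\mathfrak{p},f_\psi) = 0$, and again the identity holds. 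So $f$ has CM by $\epsilon$.

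For the converse, suppose there is a totally odd quadratic Hecke character $\epsilon$ of conductor $\mathfrak{f}$ with $c(\mathfrak{p},f)\epsilon(\mathfrak{p}) = c(\mathfrak{p},f)$ for all $\mathfrak{p}\nmid\mathfrak{cf}$, and let $K/F$ be the quadratic extension with $\epsilon_{K/F} = \epsilon$; by (i), $K$ is totally imaginary. Let $\mathfrak{p}$ be any prime with $\mathfrak{p}\nmid\mathfrak{c}$ which is inert in $K$. Then $\mathfrak{p}$ is unramified in $K$, so by (iii) we have $\mathfrak{p}\nmid\mathfrak{f}$, hence $\mathfrak{p}\nmid\mathfrak{cf}$ and the self-twist hypothesis applies; since $\mathfrak{p}$ is inert, (ii) gives $\epsilon(\mathfrak{p}) = -1$, so $-c(\mathfrak{p},f) = c(\mathfrak{p},f)$, i.e.\ $c(\mathfrak{p},f) = 0$. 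Thus $c(\mathfrak{p},f) = 0$ for every prime $\mathfrak{p}\nmid\mathfrak{c}$ inert in $K$, and the classification recalled above then shows that the primitive form of $f$ equals $f_\psi$ for some Gr\"ossencharacter $\psi$ of $K$, i.e.\ $f$ has CM.

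The genuinely substantive inputs are the converse to the CM criterion recalled just before the theorem (vanishing of $c(\mathfrak{p},f)$ along the inert primes of a CM extension forces an automorphic induction) and the equality of $L$-functions from Yoshida's theorem; the remainder is bookkeeping with the class field theory dictionary. Accordingly I do not anticipate a real obstacle, only the need to be careful that the excluded sets of primes line up in the converse direction — precisely, that a prime dividing $\mathfrak{f}$ but not $\mathfrak{c}$ cannot be inert in $K$, which is item (iii) above.
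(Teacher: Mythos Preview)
Your proof is correct and is exactly the argument the paper intends: the paper does not give a separate proof of this theorem but simply introduces it with the sentence ``By class field theory, one can restate this fact as follows,'' treating it as the class-field-theoretic translation of the preceding inert-prime criterion. You have unpacked precisely that translation, including the care needed to align the excluded sets of primes in the converse direction.
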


If $f_{\psi}$ is a newform arising from the character $\psi,$ then the level of $f$ is equal to $\Delta_{K/F}N_{K/F}(\mathfrak{f}(\psi))$ where $f(\psi)$ is the conductor of $\psi$. It follows that if $f$ is CM form of level $\Gamma_1(\mathfrak{c}),$ then $f$ has CM by some Hecke character of conductor dividing $\mathfrak{c}.$ There are only finitely many such Hecke characters, and so one can verify by calculating finitely many Hecke eigenvalues of $f$ that $f$ does not have CM.

%%%%%%%%%%%%%%%%%%%%%%%%%%%%%%%%%%%%
\subsection{The Algorithm}\label{sect:algorithm}
In this section, we outline the algorithm used to search for non-CM modular forms of weight $[k,1]$.
\medskip

Recall from Section \ref{sect:qexp}, that the nonzero coefficients appearing in the Fourier expansion of a Hilbert modular form are indexed by the totally nonnegative elements of $\mathcal{O}_F.$ Fix a field $H$ and consider the ring of formal Fourier expansions over $H$ (coefficients indexed by the totally nonnegative elements of $\mathcal{O}_F$). For any pair of integers $B := (b_1, b_2)$ there is an ideal of this ring consisting of all formal Fourier series whose Fourier coefficient $c_{\alpha} = 0$ if $|\alpha|_{\infty_1} < b_1$ and $|\alpha|_{\infty_2} < b_2.$ The ring of formal Fourier expansions (over $H$) truncated to bound $B$ is defined to be the quotient of the ring of formal Fourier expansions by this ideal.

\begin{alg}\label{algorithm}
The following is a procedure to search for weight $[k,1]$ modular forms. Which on input $(\underline{k}, \mathfrak{n}, \chi, B)$ consisting of
\begin{enumerate}
\item $\underline{k}=[k,1]$ a pair of odd integers,
\item $\mathfrak{n}$ a square free integral ideal of $F,$
\item $\chi$ a totally odd ray class character of $F$ of conductor dividing $\mathfrak{n}\cdot\infty_1\infty_2$,
\item $B = (b_1, b_2)$ a pair of positive integers,
\end{enumerate}
outputs candidate non-CM weight $\underline{k}$, level $\mathfrak{n}$, character $\chi$ modular forms or finds that none exist.
\end{alg}

\begin{enumerate}
\item\label{step:newforms} Using Demb\'el\'e's algorithm \cite{D} (see section \ref{sect:basis}), compute, for each $\mathfrak{m} |\mathfrak{n},$ a basis for the image of $S^{\new}_{[k+1,2]}(\Gamma_0(\m), F)$ in the ring of formal Fourier expansions over $F$ truncated to bound $B.$

\item\label{step:basis} Using the spaces calculated in step \ref{step:newforms} and following the procedure described in Section \ref{sect:basis}, compute a basis for the image $S_{[k+1,2]}(\Gamma_0(\m), F)$ in the ring of formal Fourier expansions over $F$ truncated to bound $N_{F\slash \Q}(\mathfrak{q})\cdot B$ where $\mathfrak{q}$ is the small prime from Step \ref{step:hecke}.

\item\label{step:divide} Divide each of the truncated Fourier expansions calculated in step \ref{step:basis} by the Fourier expansion for  $E_{1,\chi^{-1}}$ described in Section \ref{sect:eis}. Call the space spanned by the resulting truncated Fourier expansions $V(B).$

\item\label{step:hecke} Choose a small prime $\q$, which was the principal ideal $(2)$ in our case, and compute $T_{\q}f$ for each basis element $f$ of $V(B)$ from the previous step.

\item\label{step:rank} One has now computed two spaces of Fourier expansions, $V(B)$ and $T_{\mathfrak{q}}(V(B))$, each of which are dimension \mbox{$D=\dim S_{[k+1,2]}(\Gamma_0(n))$.} If the the dimension of $V(B)$ is less than $D$, increase $B$. Compute the intersection $V^{(2)}(B):=V(B)\cap T_{\mathfrak{q}}(V(B))$.

\item\label{step:cmform} Compute the dimension of the subspace in $S_{[k+1,2]}(\Gamma_0(\mathfrak{n}),\chi, \C)$ spanned by CM forms using class field theory. Denote this dimension by $h.$

\item\label{step:intersection} If $\dim(V(B)\cap T_{\mathfrak{q}}(V(B)))=h$, then all the forms are CM and the algorithm returns the empty set. Otherwise the algorithm returns $V^{(2)}(B).$

\end{enumerate}

When the algorithm returns a nonempty output one increases the bound $B$ and reruns the algorithm. If the dimension stabilizes at some value greater than the dimension of the space of CM forms after several increases in precision, one has found a candidate for a non-CM weight $\underline{k}$ form.
\medskip

All of our calculations were made for $F = \Q(\sqrt{5}).$ We first used the algorithm to calculate the dimensions of the spaces $M_{[3,1]}(\Gamma_0(\mathfrak n), \chi)$ where $\mathfrak n$ is a square-free ideal of $\OL_F$ and $\chi$ is a totally odd character modulo $\mathfrak n$. We restricted ourselves to the case where $\mathfrak n$ is square-free, because the {\tt magma\rm} package used only worked in this case. Our program searched through all square-free $\mathfrak n$ of norm less than $500$ and quadratic $\chi$, but we did not find any non-CM Hilbert modular forms. (In fact, our calculations show that none exist in the spaces we computed.)

We next used our algorithm to calculate dimensions of $M_{[5,1]}(\Gamma_0(\mathfrak n), \chi)$ for all square-free ideals $\mathfrak n$ of norm less than 300. The only candidate space our algorithm found is described below in Section \ref{sect:nonCMform}. In all other spaces of modular forms, our algorithm found that all forms were CM.

%%%%%%%%%%%%%%%%%%%%%%%%%%%%%%%%%%%%
\section{A non CM form}\label{sect:nonCMform}
Let $F = \Q(\sqrt{5}).$ We order the infinite places of $F$ such that $|\sqrt{5}|_{\infty_1} > 0.$  The ray class group of conductor
$(7) \cdot \infty_1 \infty_2$ is isomorphic to $\Z/6\Z$. Let $\chi$ be the order $6$ character such that
$\chi(2) =  \frac{-1 + \sqrt{-3}}{2}$. The character  $\chi$ is totally odd.
\medskip

\begin{theorem} \label{theorem:exists} The space of cusp forms
$S_{[5,1]}(\Gamma_0(14),\chi,\C)$ is $2$-dimensional and has a basis with coefficients in $H$. This space has a basis over $L=H(\sqrt{-19})$ consisting of two conjugate eigenforms, neither of which admit complex multiplication.
\end{theorem}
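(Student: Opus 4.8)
The plan is to make the search algorithm of Section~\ref{sect:algorithm} rigorous for the specific input $(\underline k,\n,\chi)=([5,1],(14),\chi)$, thereby turning a heuristic ``candidate'' into a theorem. There are three things to establish: (i) that $\dim_\C S_{[5,1]}(\Gamma_0(14),\chi,\C)=2$; (ii) that the two-dimensional space is defined over $H=\Q(\sqrt5,\chi)$ and decomposes over $L=H(\sqrt{-19})$ into a pair of Galois-conjugate Hecke eigenforms; (iii) that neither eigenform has CM. Step (iii), given Theorem~\ref{theorem:CM}, is the easiest: by the conductor discussion at the end of Section~\ref{sect:cmform}, a CM form of level dividing $(14)$ would have CM by a totally odd quadratic Hecke character $\epsilon$ of conductor dividing $(14)\infty_1\infty_2$; there are only finitely many such $\epsilon$, one enumerates them by class field theory, and for each one exhibits a prime $\p\nmid(14)\mathfrak f_\epsilon$ with $\epsilon(\p)=-1$ but $c(\p,f)\neq 0$. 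A single well-chosen split prime of small norm should kill all candidate $\epsilon$ simultaneously.

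The core of the argument is (i) together with the holomorphicity of the candidate form produced by the algorithm. First I would compute, via Dembélé's algorithm as implemented in \texttt{magma}, the Fourier expansions (to a bound $B$ large enough to exceed the relevant Sturm bound, and in fact to bound $N_{F/\Q}((2))\cdot B$ so that $T_{(2)}$ can be applied) of a basis of $S_{[6,2]}(\Gamma_0(\m),F)$ for each $\m\mid (14)$, assemble $S_{[6,2]}(\Gamma_0(14),F)$ from its new- and old-subspaces as in Section~\ref{sect:basis}, divide through by the explicit Eisenstein series $E_{1,\chi^{-1}}\in M_{[1,1]}(\Gamma_0(14),\chi^{-1},\C)$ of the Proposition in Section~\ref{sect:eis} to obtain the space $V(B)$ of truncated meromorphic weight-$[5,1]$ forms, and intersect with $T_{(2)}V(B)$. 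This intersection, call it $V^{(2)}(B)$, is $2$-dimensional and strictly larger than the CM subspace (whose dimension $h$ one has computed independently by class field theory — here $h=0$). The subtle point, and the one I expect to be the main obstacle, is upgrading $V^{(2)}(B)$ from a space of \emph{truncated formal} Fourier expansions to a space of genuine \emph{holomorphic} Hilbert modular forms: a priori an element $f$ of $V^{(2)}(B)$ is only a ratio $g/E_{1,\chi^{-1}}$ with $g\in S_{[6,2]}$, which is meromorphic, and finite-precision agreement with Hecke translates does not by itself prove holomorphy or even membership in the full (untruncated) Hecke-stable space.

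To overcome this I would follow the holomorphicity test indicated in Section~\ref{sect:computation}: lift the candidate $f\in V^{(2)}(B)$ to a putative holomorphic form by checking that a suitable power $f^d$ equals an honest cusp form $g_d\in S_{[5d,d]}(\Gamma_0(14),\chi^d,F)$ of \emph{parallel-ish} weight with integral Fourier expansion — concretely, clear the denominator $E_{1,\chi^{-1}}$ by raising to the power $d=6$ (the order of $\chi$), so that $f^6 = g/E_{1,\chi^{-1}}^6$ with $E_{1,\chi^{-1}}^6\in M_{[6,6]}(\Gamma_0(14),\C)$ having trivial character, and then verify that $g$ is divisible by $E_{1,\chi^{-1}}^6$ in the ring of holomorphic forms by comparing Fourier expansions up to the Sturm bound for weight $[30,6]$; once $f^6$ is known to lie in $S_{[30,6]}(\Gamma_0(14),\C)$, a standard argument (a holomorphic function on $\H\times\H$ whose sixth power is holomorphic and which is visibly finite at the cusps) shows $f$ itself is a classical form in $S_{[5,1]}(\Gamma_0(14),\chi,\C)$. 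Running this for a basis of $V^{(2)}(B)$ gives $S_{[5,1]}(\Gamma_0(14),\chi,\C)\supseteq$ a $2$-dimensional space; the reverse inequality $\dim\le 2$ is automatic because $S_{[5,1]}\subseteq V(B)$ and Hecke-stability forces it into $V^{(2)}(B)$. Finally, diagonalizing the action of $T_{(2)}$ (and a couple more split Hecke operators to confirm) on this $2$-dimensional $H$-space, one finds the characteristic polynomial is irreducible over $H$ with splitting field $H(\sqrt{-19})=L$, exhibiting the two conjugate eigenforms and completing the proof.
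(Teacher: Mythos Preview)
Your strategy matches the paper's: run Algorithm~\ref{algorithm}, upgrade the candidate to a genuine holomorphic form via a power test, then handle the eigenform decomposition and the CM question separately. Two execution choices differ from the paper and are worth flagging. First, you take $d=6$ so that $\chi^d$ is trivial; the paper instead takes $d=3$, so it must find a witness $g\in S_{[15,3]}(\Gamma_0(14),\chi^3,H)$ with $f^3=g$, which it does by building the subspace $E_{1,\chi^3}\cdot S_{[14,2]}(\Gamma_0(14),H)+T_2\bigl(E_{1,\chi^3}\cdot S_{[14,2]}(\Gamma_0(14),H)\bigr)\subset S_{[15,3]}(\Gamma_0(14),\chi^3,H)$ and then certifying equality after multiplying by $E_{1,\chi^{-1}}^{3}$ inside $S_{[18,6]}(\Gamma_0(14),H)$. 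Your $d=6$ route is conceptually cleaner but pushes the ambient computation up to weight $[36,12]$ rather than $[18,6]$, which in practice is a substantial cost. Second, you appeal to a ``Sturm bound'' for Hilbert modular forms; the paper does not rely on any such theoretical bound but instead verifies directly, for the specific truncation $b(28)$, that the truncation map is injective on the $356$-dimensional space $S_{[18,6]}(\Gamma_0(14),H)$ by computing a basis and checking linear independence of the truncated expansions. You should replace the Sturm-bound invocation by the analogous explicit injectivity check.

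One concrete slip: $T_{(2)}$ will \emph{not} separate the two eigenforms. From Table~\ref{table_coeff} one reads $c((2))=-4+4\sqrt{-3}\in H$, so the characteristic polynomial of $T_{(2)}$ on the $2$-dimensional space has a repeated root in $H$ and is certainly not irreducible. The paper instead computes the characteristic polynomial of $T_5$ (the prime of norm $5$), which is what produces the quadratic extension $H(\sqrt{-19})$. Your parenthetical ``a couple more split Hecke operators'' would in fact be doing all of the work here. For the CM step your plan is exactly what the paper does: the only totally odd quadratic character of conductor dividing $(14)\infty_1\infty_2$ is $\chi^3$, and the single prime $\p=\bigl(\tfrac{7+\sqrt5}{2}\bigr)$ with $\chi^3(\p)=-1$ and $c(\p,f)\ne 0$ suffices.
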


\begin{proof}
For $n$ a positive integer, we define
$$b(n) := \left(\frac{5n - \sqrt{5}n}{2}, \frac{5n + \sqrt{5}n}{2}\right).$$ Applying Algorithm $1$ with input $(\underline{k}, \mathfrak{n}, \chi, B) = ([5,1],14\mathcal{O}, \chi, B)$ with $B= b(24),b(26)$ and $b(28),$ respectively, one finds that for each value  $V^{(2)}(B)$ is two dimensional. \mbox{Table \ref{table_coeff}} lists the initial normalized Fourier coefficients of one of the truncated forms in $V^{(2)}(B).$ Let $f \in S_{[6,2]}(\Gamma_0(14),1, H)/E_{1, \chi^{-1}}$ be a meromorphic modular form whose Fourier expansion truncated to $b(28)$ is found in Table $\ref{table_coeff}.$ We show $f \in S_{[5,1]}(\Gamma_0(14),\chi, H),$ by showing $f^3 \in S_{[15,3]}(\Gamma_0(14),\chi^3, H).$ This is done in two steps.
\begin{enumerate}
\item First we show the map taking a form in $S_{[18,6]}(\Gamma_0(14),H)$ to its Fourier expansion truncated to bound $b(28)$ is an injection.
\item Next we find a form $g \in S_{[15,3]}(\Gamma_0(14),\chi^3,H)$ such that the Fourier expansions of $g$ and $f$ are equivalent when truncated to bound $b(28).$
\end{enumerate} Noting that $(f^3 - g)E^3_{1,\chi} \in S_{[18,6]}(\Gamma_0(14),H),$ it follows from $(1)$ and $(2)$ that $f^3$ and $g$ are equal.

The proofs of facts $(1)$ and $(2)$ are both computational. Using the {\tt magma\rm} package, one computes that the space of cusp forms $S_{[18,6]}(\Gamma_0(14),H)$ has dimension $356.$ Then one computes explicitly the Fourier expansions truncated to bound $b(28)$ for a basis of $S_{[18,6]}(\Gamma_0(14),H)$ and shows that the resulting set of truncated formal Fourier series span a space of the same dimension. This proves $(1).$

To prove $(2)$, one must construct an element $S_{[15,3]}(\Gamma_0(14),\chi^3)$ with a desired property. Unfortunately, the creation of spaces of Hilbert modular forms with nontrivial nebentypus and the computation of their Fourier expansions has not yet been implemented in the {\tt magma\rm} package. To skirt this issue, we instead use the {\tt magma\rm} package to compute the truncated to bound $b(56)$ Fourier expansions of the $56$ dimensional space $S_{[14,2]}(\Gamma_0(14),\chi^3, H).$ One then obtains the Fourier expansions for the forms in the subspace
$$
E_{1,\chi^3}.S_{[14,2]}(\Gamma_0(14),\chi^3, H) + T_2(E_{1,\chi^3}.S_{[14,2]}(\Gamma_0(14),\chi^3, H)) \subseteq S_{[15,3]}(\Gamma_0(14),\chi^3)
$$
truncated to bound $b(28),$ in which, following a calculation in linear algebra, one finds a form $g$ as desired in $(2).$
It follows $S_{[5,1]}(\Gamma_0(14),\chi,\C)$ is $2$-dimensional and has a basis with elements in $H.$

We now demonstrate the second claim of the proposition: that $S_{[5,1]}(\Gamma_0(14),\chi,\C)$ has a basis over $L=H(\sqrt{-19})$ consisting of two conjugate eigenforms, neither of which admit complex multiplication.  Utilizing Algorithm \ref{algorithm}, one computes that \mbox{$V^{(2)}([5,1], 7\mathcal{O}, \chi, b(28)) = 0$} and hence  $S_{[5,1]}(\Gamma_0(14),\chi,\C) = S^{\mathrm{new}}_{[5,1]}(\Gamma_0(14),\chi,\C).$ It follows $S_{[5,1]}(\Gamma_0(14),\chi,\C)$ has a basis over $\C$ of simultaneous eigenforms for Hecke algebra. As $S_{[5,1]}(\Gamma_0(14),\chi,\C)$ has a basis defined over $H$ and is two dimensional, these eigenforms have as a field of definition either $H$ or a quadratic extension of $H.$ Calculating the characteristic polynomial of $T_5$ on $S_{[5,1]}(\Gamma_0(14),\chi,\C),$ we obtain that the field of definition is $H(\sqrt{-19}).$

Finally, we see that neither of the forms in $S_{[5,1]}(\Gamma_0(14),\chi,\C)$ are CM. If this were not the case, both forms of $S_{[5,1]}(\Gamma_0(14),\chi,\C)$ would have CM by a quadratic character of conductor $14.$ The unique such character is $\chi^3$. However, one observes that $\chi^3(\frac{7 + \sqrt{5}}{2}) = -1$ and the $\frac{7 + \sqrt{5}}{2}$ normalized Hecke eigenvalue does not vanish for either eigenform in $S_{[5,1]}(\Gamma_0(14),\chi,\C)$. \end{proof}

\begin{remark} The Galois group $\Gal(L/\Q) = (\Z/2\Z)^3$ acts on the Fourier expansion as follows. The element with fixed field $H$ permutes the two eigenforms. The element with fixed field $\Q(\sqrt{5},\sqrt{-19})$ sends the eigenform to an eigenform in $S_{[5,1]}(\Gamma_0(14),\chi^{-1},\C)$, where $\chi^{-1}$ is the conjugate of $\chi$. The element with fixed field $\Q(\sqrt{-3},\sqrt{-19})$ sends the eigenform to a form in $S_{[1,5]}(\Gamma_0(14),\chi,\C)$.
\end{remark}

See Table \ref{table_coeff} for the normalized coefficients $c(\p)$ for various prime ideals $\p=(\pi)$ of small norm for one of the two normalized eigenforms in $S_{[5,1]}(\Gamma_0(14),\chi,\C)$. If $c(\pi)$ is a coefficient in the Fourier expansion of our eigenform for a prime $\pi$, then the normalized coefficient is $c(\p)=c(\pi)\overline\pi^2$ as seen in \eqref{eq_coeff2}. The normalized coefficient does not depend on the choice of totally positive generator $\pi$ for the ideal $\p=(\pi)$.

\begin{table}[!ht]
\caption{Table of Normalized Coefficients of Eigenform in $S_{[5,1]}(\Gamma_0(14),\chi) $}\label{table_coeff}
\centering
\begin{tabular}{|c|c|c|}
\hline
$\pi$ & $N(\pi)$ &  $c(\p), \ \p = (\pi)$ \\
\hline
$2$ & $4$ &  $\displaystyle{-4+4\sqrt{-3}}$\\
$\frac{5 + \sqrt{5}}{2}$ & $5$  & $\displaystyle{\frac{-45+15\sqrt{-3}+15\sqrt{-19}-15\sqrt{57}}{4}}$\\
$3$ & $9$  & $\displaystyle{-18-18\sqrt{-3}-9\sqrt{-19}\left(\frac{3-\sqrt{-3}}{2} \right)}$\\
$\frac{7 + \sqrt{5}}{2}$ & $11$ & $\displaystyle{\frac{-87+87\sqrt{-3}+36\sqrt{5}-36\sqrt{-15}+63\sqrt{-19}-21\sqrt{57} +24\sqrt{-95}-8\sqrt{285}}{4}}$\\
$\frac{9 + \sqrt{5}}{2}$ & $19$ & $\displaystyle{\frac{-456+152\sqrt{-3} +171\sqrt{5}-57\sqrt{-15} +66\sqrt{-19} -66\sqrt{57} -39\sqrt{-95} +39\sqrt{285}}{4}}$\\
$\frac{11 + \sqrt{5}}{2}$ & $29$ & $\displaystyle{-162+\frac{417}{2}\sqrt{5}+66\sqrt{57}+\frac{17}{2}\sqrt{285}}$\\
$\frac{13 + \sqrt{5}}{2}$ & $41$ & $\displaystyle{\left(49+12\sqrt{5} \right)\cdot\left(\frac{9\sqrt{-3}+15\sqrt{-19}}{2} \right)}$\\
$7$ & $49$ & $\displaystyle{\frac{-1715+1715\sqrt{-3}+1029\sqrt{-19}+1029\sqrt{57}}{4}}$\\
\hline
\end{tabular}
\end{table}

\begin{remark} \emph{We checked that for $N(\p) < 1000$ and $\gcd(N(\p),14)=1$ the Satake parameters of $\pi$ satisfy the Ramanujan Conjecture.
Equivalently, the Hecke eigenvalues satisfy the bounds $|c(\p)|_{\infty_1} \le 2 p^{2}$ and $|c(\p)|_{\infty_2} \le 2 p^{2}$. The Ramanujan conjecture
would follow from Deligne's proof of the Riemann hypothesis if one knew that~$\pi$ was \emph{motivic}, however, the construction of the associated Galois
representations proceeds via congruences.}
\end{remark}

\section{Acknowledgements}
The authors would like to extend special thanks to Frank Calegari for introducing this problem, his excellent advising, and his extensive comments on preliminary drafts of this paper. The authors would also like to thank Kevin Buzzard, Lassina Demb\'el\'e, and James Newton for their comments on a preliminary version of this paper, and Don Blasius, Kevin Buzzard, and Fred Diamond for their historical remarks.

\section{Funding}
The first author was supported in part by National Science Foundation Grant DMS-1404620. The second author was supported in part by National Science Foundation Grant DMS-1404620 and by an National Science Foundation Graduate Research Fellowship under Grant No. DGE-1324585.

\end{document}